\documentclass[12pt]{article}
\usepackage{amsmath, amsfonts, amsthm, amssymb, color, hyperref, extarrows, enumitem, nicefrac,blindtext, mathtools, cite, anyfontsize}

\textwidth=17.5cm
\textheight=24cm
\parindent=16pt
\oddsidemargin=-0.5cm
\evensidemargin=-0.5cm
\topmargin=-2cm

\newtheorem{theorem}{Theorem}[section]

\newtheorem{cor}[theorem]{Corollary}
\newtheorem{lem}[theorem]{Lemma}

\numberwithin{equation}{section}

\newtheorem{example}{Example}

\usepackage[hyperpageref]{backref}

\hypersetup{
	colorlinks   = true,
	citecolor    = magenta}
	
\begin{document}
\title{\vspace{-1cm} \bf Continuous Sobolev functions with singularity on arbitrary real-analytic sets   \rm}
\author{Yifei Pan  \ \ and \ \  Yuan Zhang }
\date{}

\maketitle

\begin{abstract}
Near every point of a real-analytic set   in $\mathbb R^n$, we make use of Hironaka's resolution of singularity theorem to construct  a family of continuous functions in $  W^{1, 1}_{loc}$ such that their   weak derivatives  have (removable) singularity precisely on that set.   

\end{abstract}

\renewcommand{\thefootnote}{\fnsymbol{footnote}}
\footnotetext{\hspace*{-7mm}
\begin{tabular}{@{}r@{}p{16.5cm}@{}}
& 2020 Mathematics Subject Classification. Primary 32C07; Secondary 46E35, 14E15. \\
& Key words and phrases.
Logarithms, real-analyticity, Sobolev spaces, resolution of singularities. 
\end{tabular}}

\section{Introduction}
Given a domain $U$  in $\mathbb R^n, n\ge 1$, denote by  $W_{loc}^{k, p}(U)$ the Sobolev space  consisting of functions on $U$ whose $k$-th order weak derivatives exist and belong to $L_{loc}^p(U)$,  $k\in \mathbb Z^+, p\ge 1$.  In this note, we investigate a Sobolev property for the reciprocal of logarithms   of real-analytic functions near their zero sets.  In detail, given a    real-analytic  nonconstant function $ f   $ on $U$,     consider  \begin{equation}\label{v}
    v: = \frac{1}{\ln |f|}\ \ \text{on}\ \ U.
\end{equation} As we are solely interested in the Sobolev behavior of  $v$ near $f=0$, and additional singularity would be introduced near $|f|=1$, we further assume, say,  $|f|<\frac{1}{2}$ on $U$. Consequently $v$ is continuous on $U$. Moreover, letting $f^{-1}(0)$ be the  zero set  of $f$ in $U$, then  $v|_{f^{-1}(0)}  =0$, and $v$ is differentiable on  $U\setminus  f^{-1}(0) $. Note that $\text{codim}_{\mathbb R} f^{-1}(0) \ge 1  $ in general.

According to a classical result of Stein \cite[pp. 171]{St},   $\ln|f|\in \text{BMO}$ for any polynomial $f$. On the other hand,   a recent work\cite{SZ} by Shi and Zhang  showed that  for a real-analytic $f$ on $U$, if $ \text{codim}_{\mathbb R} f^{-1}(0)\ge 2$, then $\ln |f|\in W_{loc}^{1,1}(U)$. It is important to note that this codimension assumption  is essential and can not be dropped. In comparison to this result, although  $v$ in \eqref{v} exhibits   slightly greater regularity than $\ln|f| $, our first main theorem shows that $v$  belongs to $W_{loc}^{1,1}(U)$ regardless of the codimension  of $f^{-1}(0) $.


\begin{theorem}\label{main}
  Let $U$ be a domain in $\mathbb R^n, n\ge 1$. Let $f$ be a real-analytic nonconstant  function on $U$ and $|f|< \frac{1}{2} $ on $U$. The follow statements hold.\\
   1). $\frac{1}{\ln |f|}\in W^{1, 1}_{loc}(U)$. \\
   2). If $ \text{codim}_{\mathbb R} f^{-1}(0) = 1$, then $\frac{1}{\ln |f|}\notin W^{1, p}_{loc}(U) $ for any $p>1$.
\end{theorem}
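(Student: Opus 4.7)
The function $v=1/\ln|f|$ is continuous on $U$ with $|v|\le 1/\ln 2$ (since $|f|<1/2$), hence $v\in L^\infty_{loc}(U)\subset L^1_{loc}(U)$. Set $Z:=f^{-1}(0)$. Off $Z$, $v$ is real-analytic with classical gradient
\[
\nabla v=-\frac{\nabla f}{f\,(\ln|f|)^{2}}.
\]
To prove part 1) I would (i) bound $|\nabla v|$ in $L^1_{loc}(U)$, and (ii) identify this classical gradient with the weak gradient across $Z$.

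\textbf{Integrability via resolution.} For (i), I would localise near a point of $Z$ and apply Hironaka's real-analytic resolution of singularities: there exists a proper real-analytic surjection $\pi:\widetilde U\to U$ which is a diffeomorphism over $U\setminus Z$, such that around every point of $\pi^{-1}(Z)$ one can find local coordinates $(y_1,\dots,y_n)$ in which
\[
(f\circ\pi)(y)=u(y)\,y_1^{a_1}\cdots y_n^{a_n},\qquad u\text{ non-vanishing real-analytic},\ a_j\in\mathbb{Z}_{\ge 0}.
\]
Pulling $\int_K|\nabla v|\,dx$ back through the diffeomorphism $\pi|_{U\setminus Z}$ and using a finite partition of unity subordinate to such charts, the claim reduces to bounding, on a small box,
\[
\int\frac{\sum_j a_j/|y_j|+O(1)}{\bigl(\sum_j a_j\ln|y_j|+O(1)\bigr)^{2}}\,dy,
\]
where the $O(1)$ terms absorb contributions from $u$ and from the bounded real-analytic Jacobian/adjugate factors of $\pi$. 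Since $a_j\ln|y_j|\le 0$ for $|y_j|<1$, the denominator is bounded below by $\tfrac12 a_k^{2}(\ln|y_k|)^{2}$ for any single $k$ with $a_k>0$, so each numerator term $a_k/|y_k|$ contributes at most $C/[|y_k|(\ln|y_k|)^{2}]$, and the whole estimate reduces to the one-variable fact
\[
\int_0^{1/2}\frac{dt}{t\,(\ln t)^{2}}<\infty.
\]

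\textbf{Weak gradient via ACL, and part 2).} For (ii), I would use the ACL characterisation of $W^{1,1}_{loc}$. For a.e.\ line $L$ parallel to a coordinate axis, $f|_L$ is a nonzero real-analytic function of one variable (the exceptional lines form a proper real-analytic subset of the space of parallel lines), so $L\cap Z$ is discrete; then $v|_L$ is continuous, $C^1$ off a discrete set where it vanishes, and its classical derivative lies in $L^1$ by Fubini and (i). An interval-by-interval application of the fundamental theorem of calculus across these vanishing points shows $v|_L$ is absolutely continuous on bounded intervals, yielding ACL and hence $v\in W^{1,1}_{loc}(U)$. For part 2), when $\text{codim}_{\mathbb R}Z=1$ the set $Z$ contains a smooth real-analytic hypersurface, and near one of its smooth points I choose coordinates so that $f(x)=x_1^{a}g(x_1,x')$ with integer $a\ge 1$ and real-analytic $g$ nonzero at the origin. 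On a small box $B=\{|x_1|<\delta\}\times B'$ where $|g|$ is bounded away from $0$ and $\infty$, one computes $|\partial_1 v|\sim 1/[|x_1|(\ln|x_1|)^{2}]$ as $x_1\to 0$, and the substitution $t=-\ln|x_1|$ gives
\[
\int_B|\partial_1 v|^{p}\,dx\ge c|B'|\int_{-\ln\delta}^{\infty}\frac{e^{(p-1)t}}{t^{2p}}\,dt=\infty\quad\text{for every }p>1,
\]
so $v\notin W^{1,p}_{loc}(U)$.

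\textbf{Main obstacle.} The real technical step is the chart-by-chart estimate in (i): one must organise the bounded Jacobian/adjugate factors from $\pi$, the unit $u$, and the monomial factors so that the logarithmic denominator $(\sum_j a_j\ln|y_j|+O(1))^{2}$ uniformly dominates the linear singularities $\sum_k a_k/|y_k|$, and the whole integrand telescopes into products of convergent one-variable integrals $\int t^{-1}(\ln t)^{-2}\,dt$. Once this reduction is in hand, the ACL argument in (ii) and the direct calculation in part 2) are routine.
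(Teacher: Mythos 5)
Your proposal is correct but travels a genuinely different route than the paper's proof, in all three of its essential steps, so it is worth spelling out the contrast. For the $L^1$ bound on $\nabla v$ the paper never changes variables through the resolution; instead it proves a standalone lemma (via Hironaka plus an induction on dimension) that $m(f^{-1}(t))\lesssim 1$ uniformly for $|t|$ small, and then applies the coarea formula to reduce everything to the scalar integral $\int_0^{1/2}\frac{dt}{t(\ln t)^2}$. Your chart-by-chart pullback with the adjugate/$\det D\pi$ cancellation is in fact the strategy the paper deploys only for the \emph{holomorphic} case (Theorem~1.3), so you have effectively unified the two; the cost is a somewhat more delicate pointwise estimate, while the paper's coarea route yields the reusable level-set bound as a byproduct. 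For the identification $\nabla v=g$ across $f^{-1}(0)$ the paper uses a Harvey--Polking cut-off $\rho_\epsilon$ and controls $\int v\,\eta\,\nabla\rho_\epsilon$ by Loeser's tube-volume estimate $m(K_{3\epsilon})\lesssim\epsilon$, whereas you invoke the ACL characterisation of $W^{1,1}_{loc}$, using that $f^{-1}(0)$ meets almost every coordinate line in a discrete set; your route avoids Loeser's theorem entirely, and the paper's route avoids the slicing argument — both are legitimate, and the cut-off approach is arguably more robust if one wanted a version without continuity of $v$. For part 2) the paper works globally: it uses the \L{}ojasiewicz gradient inequality $|\nabla f|\gtrsim|f|^\beta$ together with the coarea formula and a lower bound on level-set measures, while you work locally at a smooth point of the codimension-one stratum, write $f=x_1^a g$ with $g\neq 0$, and integrate explicitly; your computation is more elementary and arguably more transparent, while the paper's argument is uniform and does not require isolating a smooth point or a Weierstrass-type factorisation. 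One small clarification worth recording in your write-up: the uniform lower bound on the denominator, $|\ln|f\circ\pi||\gtrsim a_k|\ln|y_k||$, only holds once $|y_k|$ is small (so that $a_k|\ln|y_k||$ dominates the bounded term $|\ln|u||$); for $|y_k|$ bounded away from $0$ one should instead use $|\ln|f\circ\pi||\geq\ln 2$ and the boundedness of $a_k/|y_k|$ — you clearly have this in mind, but the two regimes should be stated explicitly.
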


  The main idea of the proof is to   use    the coarea formula to transform the integrals under consideration into new ones  along level sets of the function $f$. The $L^1$-integrability and the $L^p$-nonintegrability for $p>1$ that we seek  are thus consequences of certain quantitative properties of the level sets of $f$, which can be conveniently established by utilizing the powerful    Hironaka's resolution of singularity theorem and   the Lojasiewicz gradient inequality. A  novelty of Theorem \ref{main} is to provide ample $W^{1, 1}_{loc}$ functions. For instance, $\frac{1}{\ln|P(x)|}\in W^{1, 1}_{loc}$ for any polynomial $P$ near its zeros.  It is also interesting to point out that Theorem \ref{main} indicates that  Sobolev spaces in general do not satisfy  an openness property, in the sense that there exists a   class of functions in $W_{loc}^{k, p}(U)$ for some $p\ge 1$ but not in $W_{loc}^{k, q}(U) $ for any $q>p$.

 Unfortunately our method can not be applied directly in the smooth category, due to the absense of a Hironaka-type resolution property for smooth functions. It is natural to wonder if there is an easy way to verify the optimal Sobolev property of $v$, say, for any finitely vanishing smooth function $f$.  For instance, consider the function  $f(x, y): = y^2- \sin\left(e^{\frac{1}{x}}\pi\right)e^{-\frac{1}{x^2}}$, which  is smooth near $0\subset\mathbb R^2$ and vanishes to second order at $0$. It turns out, with a straight-forward computation, that  $\frac{1}{\ln |f|}\in W^{1,1} $ near $0$.   

 As a consequence of Theorem \ref{main},  the weak derivative  $\nabla v$ exists on $U$. Specifically, this implies that  the singularity set $f^{-1}(0)$ of $\nabla v$ in the classical sense   is actually a removable singularity in the weak sense. In other words, Theorem \ref{main} allows us to construct, for any  given real-analytic set, a continuous function in $W_{loc}^{1, 1}$ such that its weak derivative  have a removable  singularity precisely on that set.  

\begin{cor}\label{co}
    Let $A$ be a  real-analytic set  in $\mathbb R^n$. For every $p\in A$,  there exists an open neighborhood  $V$ of $p$  and a continuous function $u\in W^{1, 1}_{loc}(V)$, such that the set of  removable singularity of $\nabla u$ is $A\cap V$.
\end{cor}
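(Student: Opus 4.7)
The idea is to reduce directly to Theorem~\ref{main} by expressing the real-analytic set $A$, near $p$, as the zero set of a single real-analytic function. By definition of a real-analytic set, there exist an open neighborhood $W$ of $p$ and real-analytic functions $f_1,\dots,f_m$ on $W$ such that $A\cap W=\{f_1=\cdots=f_m=0\}$. Setting $f:=f_1^2+\cdots+f_m^2$ produces a single real-analytic function on $W$ whose zero set is exactly $A\cap W$. Since $A$ is nowhere dense in $W$ (implicit in the statement of the corollary in order for there to be a nontrivial classical singular set), $f$ is nonconstant.

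Next, I would shrink $W$ to a smaller open neighborhood $V$ of $p$ on which $|f|<\frac{1}{2}$, which is possible since $f$ is continuous with $f(p)=0$. With $u:=\frac{1}{\ln|f|}$ on $V$ (and $u=0$ on $f^{-1}(0)$), the discussion preceding Theorem~\ref{main} shows that $u$ is continuous on $V$, and part 1 of Theorem~\ref{main} applied to $f$ yields $u\in W^{1,1}_{loc}(V)$.

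It then remains to verify that the set of removable singularities of $\nabla u$ is precisely $A\cap V$. On the complement $V\setminus(A\cap V)=V\setminus f^{-1}(0)$, $f$ is real-analytic and nonvanishing, so $u$ is real-analytic there and its classical gradient (which agrees with its weak gradient) is smooth; in particular there is no singularity outside $A\cap V$. To show that the classical gradient does \emph{not} extend continuously to any point of $A\cap V$, I would use the explicit formula
\[
|\nabla u(x)|=\frac{|\nabla f(x)|}{|f(x)|\,(\ln|f(x)|)^{2}},\qquad x\in V\setminus f^{-1}(0),
\]
together with the Lojasiewicz gradient inequality already invoked in the proof of Theorem~\ref{main}: near any $x_0\in A\cap V$ one has $|\nabla f(x)|\ge c\,|f(x)|^{\alpha}$ for some $c>0$ and $\alpha\in[0,1)$. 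Consequently $|\nabla u(x)|\to\infty$ as $x\to x_0$ from $V\setminus f^{-1}(0)$, so the classical gradient fails to have a continuous extension there. Theorem~\ref{main} supplies a weak gradient on all of $V$, rendering each point of $A\cap V$ a removable singularity in the weak sense.

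The main (and rather mild) obstacle is precisely this last step: one must ensure the classical singularity sits on \emph{every} point of $A\cap V$, including places where $A$ is singular, isolated, or has lower-dimensional components. A naive argument using only regular points of the zero set could miss these. The Lojasiewicz inequality removes the difficulty uniformly by supplying a single lower bound on $|\nabla f|$ with exponent strictly below one, forcing $|\nabla u|$ to diverge near the entire zero set of $f$.
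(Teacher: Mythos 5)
Your proposal is correct and follows essentially the same route as the paper: represent $A\cap V$ locally as the zero set of a single real-analytic $f$ (the paper asserts this directly, you supply the standard sum-of-squares construction $f=\sum f_i^2$) and then apply Theorem~\ref{main} to $u=\frac{1}{\ln|f|}$. The one substantive addition you make is the Lojasiewicz-inequality argument showing $|\nabla u|\to\infty$ at \emph{every} point of $f^{-1}(0)$, so that the classical singular set is precisely $A\cap V$; the paper treats this as self-evident (having already set it up in the discussion preceding the corollary), but your verification is a legitimate and correct filling of that small gap.
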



Finally, we study the Sobolev property of $v$ in the special case when $f$ is  a holomorphic function on $U\subset \mathbb C^n$. Note that  in this case $\text{codim}_{\mathbb R} f^{-1}(0) = 2$ unless   $f\ne 0$ on $U$. 

\begin{theorem}\label{main2}
  Let $U$ be a domain in $\mathbb C^n$. Let $f$ be a holomorphic nonconstant function on $U$ and $|f|<\frac{1}{2}$ on $U$. The following statements hold.\\
  1). $  \frac{1}{\ln |f|}\in W^{1, 2}_{loc}(U).$\\
  2). If $  f^{-1}(0) \ne \emptyset$, then $  \frac{1}{\ln |f|}\notin W^{1, p}_{loc}(U)$ for any $p>2$.
\end{theorem}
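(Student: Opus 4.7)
\textbf{Plan for Theorem \ref{main2}.} The driving observation is that, for a holomorphic $f$, the function $\ln|f|$ is harmonic on $U\setminus f^{-1}(0)$, and $|\nabla|f||=|\partial f|$ there, where $\partial f:=(\partial_{z_1}f,\ldots,\partial_{z_n}f)$. A direct computation then yields, on $U\setminus f^{-1}(0)$,
\[
|\nabla v|^2 \;=\; \frac{|\partial f|^2}{|f|^2(\ln|f|)^4}, \qquad \Delta v \;=\; \frac{2|\partial f|^2}{|f|^2(\ln|f|)^3}, \qquad |\nabla v|^2 \;=\; \tfrac{1}{2}\,v\,\Delta v,
\]
with $\Delta$, $\nabla$ the real Laplacian and gradient on $\mathbb R^{2n}$. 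The last identity is the engine of part 1).

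To prove part 1), I run a Caccioppoli-type argument. Fix a nonnegative $\phi\in C_c^\infty(U)$. By Sard's theorem, for a.e. small $\epsilon>0$ the superlevel set $U_\epsilon:=\{|f|>\epsilon\}$ has smooth boundary and $v$ is real-analytic on $U_\epsilon$. Multiplying $|\nabla v|^2=\tfrac{1}{2}v\Delta v$ by $\phi^2$ and integrating by parts on $U_\epsilon$ gives
\[
\int_{U_\epsilon}\phi^2|\nabla v|^2 \;=\; -\tfrac{1}{3}\int_{U_\epsilon} v\,\nabla v\cdot\nabla\phi^2 \;+\; \tfrac{1}{3}\,B_\epsilon,
\]
where $B_\epsilon:=\int_{|f|=\epsilon} v\phi^2\,\partial_\nu v\,dS$ with $\nu$ the outward unit normal. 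A direct computation using $\partial_\nu v=|\partial f|/\bigl(\epsilon(\ln\epsilon)^2\bigr)$ and $v=1/\ln\epsilon$ on $\partial U_\epsilon$ produces
\[
B_\epsilon \;=\; \frac{1}{\epsilon(\ln\epsilon)^3}\int_{|f|=\epsilon}\phi^2|\partial f|\,dS \;\le\; 0,
\]
since $\ln\epsilon<0$. The favorable sign is the crux: $B_\epsilon$ can simply be discarded rather than shown to vanish. Applying Cauchy--Schwarz to the remaining term and invoking the uniform bound $|v|\le 1/\ln 2$ on $U$ then yields
\[
\int_{U_\epsilon}\phi^2|\nabla v|^2 \;\le\; \tfrac{4}{9}\int_U v^2|\nabla\phi|^2 \;<\;\infty,
\]
independent of $\epsilon$. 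Sending $\epsilon\to 0$ by monotone convergence (and using Theorem \ref{main} applied to the real-analytic function $|f|^2$ to identify the pointwise gradient with the weak derivative) gives $v\in W^{1,2}_{loc}(U)$.

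For part 2), I localize near a smooth point of $f^{-1}(0)$. Since $f^{-1}(0)$ is a nonempty complex analytic hypersurface, its regular locus is open and dense; pick any smooth point $q_0$. Using that the local ring $\mathcal O_{q_0}$ is a UFD together with a biholomorphic straightening of the smooth hypersurface through $q_0$, I may write $f(z)=u(z)\,z_1^m$ near $q_0$ for some nonvanishing holomorphic $u$ and some integer $m\ge 1$. A short computation then produces the pointwise lower bound
\[
|\nabla v|^p \;\ge\; \frac{c}{|z_1|^p\,|\ln|z_1||^{2p}}
\]
in a small ball $B(q_0,\delta)$. Integrating in polar coordinates in the $z_1$-plane and over the transverse variables reduces matters to a factor of $\int_0^\delta r^{1-p}|\ln r|^{-2p}\,dr$, which diverges for every $p>2$ (since $r^{1-p}$ is non-integrable near $0$ when $p>2$, and the logarithmic factor is too slow to compensate). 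Hence $v\notin W^{1,p}_{loc}(U)$.

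The principal technical point is the boundary-term analysis in part 1): one must apply the integration by parts on a set with smooth boundary (guaranteed by Sard) and exploit the explicit form of $B_\epsilon$ to recognize its favorable sign, thereby bypassing any delicate vanishing estimate as $\epsilon\to 0$. Part 2) is routine once the local reduction $f=u\,z_1^m$ is in place.
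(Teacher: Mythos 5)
Your argument for part 1) is correct but takes a genuinely different and more elegant route than the paper. The paper proves $W^{1,2}_{loc}$ via Hironaka's resolution (Theorem \ref{hi2}), pulling back to monomial normal form $u(w)\prod w_i^{k_i}$ and estimating $\int |w_1|^{-2}|\ln|w_1||^{-4}\,dV_w$ directly. You instead exploit the harmonicity of $\ln|f|$ off $f^{-1}(0)$ to derive the identity $|\nabla v|^2=\tfrac12\,v\,\Delta v$, and run a Caccioppoli argument on $U_\epsilon=\{|f|>\epsilon\}$; the crux is recognizing that the boundary term $B_\epsilon$ has a favorable sign (because $\ln\epsilon<0$), so it can simply be dropped rather than estimated to vanish. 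This avoids resolution of singularities entirely for the $L^2$ bound, at the price of still needing to identify the pointwise gradient with the weak derivative — which you correctly do by applying Theorem \ref{main} (via Lemma \ref{lm1}) to the real-analytic function $|f|^2$, noting $\tfrac{1}{\ln|f|^2}=\tfrac12 v$ and $|f|^2<\tfrac14<\tfrac12$. The computations check out: $|\nabla\ln|f||=|\partial f|/|f|$ (since $|\nabla\psi|^2=4\sum_j|\partial_{z_j}\psi|^2$ for real $\psi$), the integration by parts gives the stated $-\tfrac13\int v\nabla v\cdot\nabla\phi^2+\tfrac13 B_\epsilon$, $\partial_\nu v=|\partial f|/(\epsilon(\ln\epsilon)^2)$ with outward normal pointing into $\{|f|<\epsilon\}$, and Cauchy–Schwarz yields $A\le\tfrac49 C$. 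For part 2) your argument is essentially the paper's (localize at a smooth point of $f^{-1}(0)$ and reduce to the $z_1$-integral $\int_0^\delta r^{1-p}|\ln r|^{-2p}\,dr$), but slightly more robust: the paper straightens to $w_n=f$, which implicitly assumes $df\ne 0$ there, whereas your factorization $f=u\,z_1^m$ via the UFD property of $\mathcal O_{q_0}$ handles non-reduced $f$ (e.g.\ $f=g^2$) cleanly, since the essential ratio $|\partial f|/|f|\gtrsim 1/|z_1|$ is independent of $m$.
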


\begin{cor}\label{co2}
    Let $A$ be a  complex analytic set  in $\mathbb R^n$. For every $p\in A$,  there exists an open neighborhood  $V$ of $p$  and a continuous function $u\in W^{1, 2}_{loc}(V)$, such that the set of  removable singularity of $\nabla u$ is $A\cap V$.
\end{cor}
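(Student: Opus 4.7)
My plan is to follow the template of Corollary~\ref{co}, with Theorem~\ref{main2} in place of Theorem~\ref{main} and exploiting the holomorphic (rather than merely real-analytic) structure of $A$. Near a given $p\in A$, the local theory of complex analytic sets yields an open neighborhood $V$ of $p$ and finitely many holomorphic functions $f_1,\dots,f_k$ on $V$ with $A\cap V=\bigcap_{j=1}^k f_j^{-1}(0)$. After rescaling each $f_j$ by a small constant and shrinking $V$, I may assume $\max_j|f_j|<\tfrac{1}{2}$ on $V$.

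When $A$ is locally a complex hypersurface (so that I may take $k=1$), I simply set $u:=\frac{1}{\ln|f_1|}$, extended by $0$ on $f_1^{-1}(0)$. Then $u$ is continuous on $V$ and real-analytic on $V\setminus (A\cap V)$, while Theorem~\ref{main2}(1) places $u$ in $W^{1,2}_{loc}(V)$. The weak gradient $\nabla u$ therefore exists on all of $V$, so its classical singularity set $A\cap V$ is precisely a removable singularity, as required.

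In higher codimension no single holomorphic function cuts out $A$ locally, so Theorem~\ref{main2} does not apply verbatim. My remedy is to form the holomorphic map $F:=(f_1,\dots,f_k):V\to\mathbb C^k$ and take $u:=\frac{1}{\ln|F|}$, where $|F|^2:=\sum_{j=1}^k|f_j|^2$. This $u$ is continuous on $V$, vanishes exactly on $A\cap V$, and is real-analytic off $A$. To obtain $u\in W^{1,2}_{loc}(V)$, I would adapt the coarea-formula / Hironaka / Lojasiewicz scheme driving Theorem~\ref{main2} to the vector-valued setting, slicing $\int|\nabla u|^2$ by the level sets $\{|F|=t\}$ and monomializing the ideal $(f_1,\dots,f_k)$ via resolution of singularities. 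The higher real codimension of $A$ should make the resulting level-set integrals strictly more benign than the scalar hypersurface case. The main obstacle I foresee is cleanly transferring the Lojasiewicz-type gradient bound and the monomial normal form from a single holomorphic function $f$ to the holomorphic map $F$; once that step is in hand, the rest of the argument mirrors the proof of Theorem~\ref{main2}.
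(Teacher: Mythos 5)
Your hypersurface case reproduces the paper's intended proof: the paper states the argument is ``similar to that of Corollary~\ref{co}, with Theorem~\ref{main} substituted by Theorem~\ref{main2},'' i.e., take a holomorphic defining function $f$ for $A$ near $p$, rescale so $|f|<\frac{1}{2}$, and set $u=\frac{1}{\ln|f|}$. In doing so, you have correctly flagged a subtlety that the paper's one-line proof glosses over: unlike the real-analytic setting of Corollary~\ref{co}, where one may always replace a defining system $f_1,\ldots,f_k$ by the single real-analytic function $\sum_j f_j^2$, a complex analytic set of complex codimension at least two is \emph{never} the zero set of a single holomorphic function (the zero set of a nonconstant holomorphic function on a domain is always a hypersurface), and $\sum_j |f_j|^2$ is not holomorphic. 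So the paper's stated method proves the corollary only when $A$ is locally a complex hypersurface; for higher complex codimension a separate argument would be required, which the paper does not supply.

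Your proposed remedy---form $F=(f_1,\ldots,f_k)$, set $u=\frac{1}{\ln|F|}$, and run the coarea/Hironaka/Lojasiewicz scheme against the real-analytic function $|F|$---points in a reasonable direction, but as written it is a plan rather than a proof. The two ingredients you yourself identify as obstacles (a monomial normal form for the ideal $(f_1,\ldots,f_k)$ after resolution, and a Lojasiewicz-type lower bound for $|\nabla|F||$ in terms of $|F|$) are exactly what would be needed to upgrade $u$ from $W^{1,1}_{loc}$ (which already follows from Theorem~\ref{main} applied to the real-analytic function $|F|^2$) to $W^{1,2}_{loc}$, and they are not carried out. Without those steps the proposal does not establish Corollary~\ref{co2} beyond the hypersurface case; on the other hand, you have correctly isolated the genuine gap in the paper's own omitted proof, and your hypersurface treatment coincides with it.
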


In view of Theorem \ref{main} and Theorem \ref{main2}, it seems to have suggested a correlation  between    the codimension of the level sets and the Sobolev integrability index. Thus, one may ask whether $v\in W^{1, d}_{loc}(U)$ if $ \text{codim}_{\mathbb R} f^{-1}(0)=d $ for some  $0\le d\le n$. Unfortunately we do not have an answer to this question in general.

\section{Proof of Theorem \ref{main}}

Recall that the coarea formula states that,  given $\phi\in L^1(U)$, and a real-valued Lipschitz  function $f$ on $U$,      then
 \begin{equation}\label{ca}
     \int_U\phi(x) |\nabla f(x)|dV_x = \int_{-\infty}^\infty \int_{f^{-1}(t)}\phi(x) dS_x dt. 
 \end{equation} 
Here given $ t\in \mathbb R $,  $S_x$ is the $(n-1)$-dimensional Hausdorff measure of the level set $f^{-1}(t)$    of $f$ defined by 
$$ f^{-1}(t):= \{x\in U: f(x)=t\}.$$ 

Towards the proof of the main theorems, we shall fix the real-analytic (or holomorphic) function $f$  and use the following notation:  two quantities $A$ and $B$ are said to satisfy $A\lesssim B$, if  $A\le CB$ for some constant $C>0$ which  depends only on    $f$ under consideration. We say $A\gtrsim B$ if and only if $B\lesssim A$, and  $A\approx B$ if and only if $A\lesssim B$ and $B\lesssim A$ at the same time.


Given a set $A\subset \mathbb R^n$, denoting by $m(A) $   the Hausdorff measure. We first utilize  Hironaka's resolution  of singularity theorem to show the Hausdorff measure of level sets of real analytic functions is bounded (from above). This will be essential in proving a Harvey-Polking type removable singularity lemma for the weak derivatives of $v$.

\begin{theorem}\cite{At}\label{hi}
    Let $f$ be a real-analytic nonconstant function defined near a neighborhood of $0\in \mathbb R^n$. Then there exists an open set $U\subset \mathbb R^n$ near $0$,  a real-analytic manifold $\tilde U$ of dimension $n$ and a proper real-analytic map $\phi: \tilde U\rightarrow U$ such that\\
    1). $\phi: \tilde U\setminus \widetilde{f^{-1}(0)} \rightarrow U\setminus f^{-1}(0) $ is an isomorphism, where $\widetilde{f^{-1}(0)}: =  \{p\in \tilde U:  \phi(p)\in f^{-1}(0)\}$.\\
    2). For each $p\in \tilde U$, there exist    local real-analytic coordinates $(y_1, \ldots, y_n)$ centered at $p$,  such that  near $p$  one has 
    $$  f\circ \phi (y)= u(y)\cdot \Pi_{i=1}^n y_i^{k_i},$$
    where $u$ is real-analytic   and $u\ne 0$, $k_i\in \mathbb Z^+\cup\{0\}$. 
\end{theorem}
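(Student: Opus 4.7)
The statement is the celebrated Hironaka resolution of singularities theorem, specialized to the monomialization (or principalization) of a single real-analytic function; the cited source \cite{At} is Atiyah's exposition of the real-analytic case. Given the depth of this result, my plan is to sketch the standard strategy rather than to attempt a self-contained argument.

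I would proceed by the iterated blow-up and descending invariant method. Start locally at $0 \in \mathbb R^n$ where $f$ vanishes, and attach to $f$ a numerical invariant (for instance the multiplicity of $f$ at a point enhanced with Hironaka's standard basis data, or the Bierstone--Milman invariant) taking values in a well-ordered set, designed so that the invariant vanishes exactly when $f$ is locally a normal crossings monomial times a nonvanishing real-analytic factor, which is the shape demanded by 2). The map $\phi:\tilde U\to U$ is then built by iteratively blowing up along smooth real-analytic centers where the invariant attains its maximum value, and composing the resulting proper morphisms.

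Two properties have to be verified at every step. First, the locus where the invariant attains its maximum must itself be a smooth real-analytic subvariety, so that blowing it up produces a real-analytic manifold of the same dimension $n$ with a proper analytic morphism to the previous stage. Second, the invariant must strictly decrease, in the chosen well-ordered target, at every point of the exceptional divisor of each admissible blow-up. Granted these two, an induction on the invariant terminates after finitely many blow-ups over any relatively compact neighborhood of $0$, so that every point of $\tilde U$ lies in the normal crossings locus and 2) holds. Condition 1) is automatic from the construction, since each blow-up is an isomorphism away from its center and every center is chosen inside $f^{-1}(0)$; properness of the composition $\phi$ then follows from properness of the individual blow-downs together with local compactness.

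The genuine obstacle, and the technical heart of Hironaka's theorem, lies in the existence of such an upper semicontinuous invariant with strict descent under admissible blow-ups, together with the smoothness of its maximum locus; this is exactly what the modern treatments of Bierstone--Milman, Wlodarczyk, and Encinas--Villamayor establish, and in the real-analytic category it is the content of \cite{At}. In the present paper the theorem is invoked purely as a black box to obtain the local monomial form of $f\circ\phi$, which is all that the subsequent coarea and Hausdorff measure estimates will use.
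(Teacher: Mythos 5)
The paper does not prove Theorem~\ref{hi} at all: it is stated with the citation \cite{At} and used as a black box, exactly as you observe in your closing paragraph. Your identification of the result as Hironaka's resolution (in Atiyah's real-analytic formulation) and your sketch of the iterated blow-up strategy with a descending well-ordered invariant is an accurate summary of how such a theorem is established in the literature; it is consistent with the paper's usage, and since no proof is given in the paper there is nothing to compare against beyond confirming that your reading of the statement and its role is correct.
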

\medskip 

\begin{lem}\label{b1}
     Let $f$ be a  real-analytic nonconstant function on $U$. Then 
     \begin{equation*}
        m\left(f^{-1}(t)\right)\lesssim 1 \ \ \text{for all} \ \ |t|<<1. 
    \end{equation*}
\end{lem}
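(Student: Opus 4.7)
The plan is to localize near $f^{-1}(0)$, apply Hironaka's theorem (Theorem \ref{hi}) to reduce $f$ to its monomial normal form in each upstairs chart, bound the Hausdorff measure of the resulting monomial level sets by explicit parameterization, and transfer the bound back via the Lipschitz property of the resolution map.

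First I would reduce to a local problem. For any compact $K\Subset U$ disjoint from $f^{-1}(0)$, analyticity gives $|f|\ge c_K>0$ on $K$, so $f^{-1}(t)\cap K=\emptyset$ for $|t|<c_K$; it therefore suffices to bound $m(f^{-1}(t)\cap V)$ on a relatively compact neighborhood $V$ of the relevant piece of $f^{-1}(0)$ and to sum finitely many such local contributions. Fix $p\in f^{-1}(0)$ and apply Theorem \ref{hi} to obtain $\phi\colon \tilde U_p\to U_p$. Properness lets me cover $\phi^{-1}(\overline V)$ (for $V\Subset U_p$) by finitely many real-analytic charts in each of which $f\circ\phi(y)=u(y)y_1^{k_1}\cdots y_n^{k_n}$ with $u$ real-analytic and nonvanishing.

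The key step is a uniform-in-$t$ bound on the $(n-1)$-Hausdorff measure of $E_t:=\{y\in Q\colon u(y)y_1^{k_1}\cdots y_n^{k_n}=t\}$ for a small closed cube $Q$ centered at $q$ in one such chart. If every $k_i=0$ then $f\circ\phi=u$ is bounded below in absolute value on $Q$, so $E_t=\emptyset$ for small $|t|$. Otherwise pick some $k_i>0$ and perform the change of variable $z_i:=y_i\,u(y)^{1/k_i}$ (keeping the other coordinates unchanged); since $u(q)\ne 0$, this is a local analytic diffeomorphism of bounded Jacobian near $q$ and it transforms $f\circ\phi$ into the pure monomial $z_i^{k_i}\prod_{j\ne i}y_j^{k_j}$. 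On each sign sector of $Q$, the level set becomes the graph of $z_i=\pm\bigl|t/\prod_{j\ne i}y_j^{k_j}\bigr|^{1/k_i}$ over the remaining coordinates, and $|\nabla_{y'}z_i|\le|z_i|\sum_{j\ne i}(k_j/k_i)/|y_j|$, so the surface area integrand satisfies $\sqrt{1+|\nabla_{y'}z_i|^2}\le 1+|\nabla_{y'}z_i|$. An elementary integration—already visible in the model case $y_1^a y_2^b=t$ in $\mathbb R^2$—yields a bound depending only on $Q$ and the exponents, uniform in $t$.

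Having controlled the monomial case, I would transfer back: $\phi$ is real-analytic and hence Lipschitz on $\phi^{-1}(\overline V)$, so $m(\phi(A))\le L^{n-1}m(A)$ for every $A\subset\phi^{-1}(\overline V)$; summing this over the finite chart cover gives $m(f^{-1}(t)\cap V)\lesssim 1$, and a final finite cover of the compact portion of $f^{-1}(0)$ under consideration completes the proof. The main obstacle I anticipate is the monomial surface-area bound itself: although $|\nabla_{y'}z_i|$ blows up as any $y_j\to 0$, one must verify that the $|t|^{1/k_i}$ prefactor inside $z_i$ absorbs these singularities to produce an integral that stays bounded as $t\to 0$. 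The remaining steps—resolution, finite covering, and Lipschitz pushforward of Hausdorff measure—are essentially bookkeeping once this normal-form estimate is in hand.
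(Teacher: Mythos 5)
Your proposal reaches the same conclusion by a genuinely different argument from the paper's. Both pass through Hironaka's resolution and a finite chart cover to reduce to a monomial normal form. From there the paper runs an induction on the ambient dimension: it covers the monomial level set $A^{l+1}(t)$ by pieces $A^{l+1}_j(t)$ on which one coordinate $y_j$ exceeds $t^{1/k}$, disintegrates via Fubini in that coordinate, and applies the induction hypothesis to the $l$-dimensional slices, so that no explicit surface-area integral is ever computed. You instead parameterize the level set as a graph $z_i=\pm\bigl|t/\prod_{j\ne i}y_j^{k_j}\bigr|^{1/k_i}$ and bound $\int\bigl(1+|\nabla_{y'}z_i|\bigr)\,dy'$ directly. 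This does close: the $t^{1/k_i}$ prefactor cancels the negative $t$-powers produced when the $1/y_j$ singularities are integrated down to their $t$-dependent lower limits, and the equal-exponent cases (which produce logarithms) are still absorbed by the surviving positive power of $t$. But this is exactly the bookkeeping the paper's induction was designed to bypass, and your sketch (as you yourself flag) defers it to an ``elementary integration'' that is in fact a nontrivial iterated computation once $n\ge 3$, requiring case distinctions on the exponents. On the plus side, your substitution $z_i=y_i\,u(y)^{1/k_i}$ to absorb the nonvanishing unit $u$ into the monomial coordinate is a clean device that makes the reduction to the pure-monomial case fully rigorous; the paper handles this step more tersely, simply invoking $u\ne 0$ without spelling out the change of variables. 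So: a correct alternative, somewhat heavier on computation, and in the treatment of $u$ arguably more transparent than the paper.
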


 \begin{proof}
Without loss of generality, assume $0\in U$ and $f(0)=0$.  Under the set-up of Hironaka's resolution Theorem \ref{hi}, for every $p\in \widetilde{f^{-1}(0)}   $, let  $(\tilde V, \psi)$ be  a coordinate chart near $p$ in $\tilde U$ such that  for $y\in \psi (\tilde V)\subset  \mathbb R^n$,
 $$f\circ\Phi(y): = f\circ \phi \circ \psi^{-1}(y) = u(y) \cdot \Pi_{i=1}^n y_i^{k_i}. $$
By properness of $\phi$,  $V: =\phi(\tilde V) $ is an open subset of $U$ near $p$.  Since $\phi$ is smooth on $\tilde U$, by shrinking $U$ if necessary, $\Phi: \psi(\tilde V)\rightarrow V  $ is smooth up to the boundary of $ \psi(\tilde V)$. By change of coordinates formula,
\begin{equation*}
    \begin{split}
      m\left(f^{-1}(t)\cap V\right) =\int_{\{f(x)=t\}\cap \phi(\tilde V)}   dS_{x} = \int_{\{f\circ \Phi(y)=t\}\cap \psi (\tilde V) }  \Phi^* dS_x \lesssim \int_{\{f\circ \Phi(y)=t\} \cap \psi (\tilde V)}   dS_y.
    \end{split}
\end{equation*}
Thus, in view of this and the fact that $u\ne 0$ on $\tilde U$, the proof boils down to show that \begin{equation}\label{in}
    m\left(A^n(t)\right)\lesssim 1  \ \ \text{for all}\ \  0<t<<1,
\end{equation}  where  
   \begin{equation}\label{an}
        A^n(t) =\{y\in \mathbb R^n:   \Pi_{i=1}^n y_i^{k_i}  =t, 0 <  y_i < 1, i=1, \ldots, n\}.
   \end{equation}  
Here the constant multiple for "$\lesssim$" in \eqref{in} is only dependent on $ k_i, i=1, \ldots, n$.   Clearly, one only needs to prove the case when all $k_i>0$. 
Let $k: = \sum_{i=1}^{n} k_i$.

We shall employ the mathematical induction on the dimension $n$ to prove \eqref{in} for all  level sets in the form of \eqref{an}. The $n=1$ case is trivial. Assume the $n=l$ case holds. Namely, for every level set $A^l(t)$ in $\mathbb R^l$ defined by \eqref{an}, $ m(A^l(t))\lesssim 1  $  for $0<t<<1$. When the dimension $n$ equals $l+1$, one first has $$  A^{l+1}(t)  \subset \cup_{j=1}^{l+1} A^{l+1}_j(t), $$ where for each $j = 1, \ldots, l+1$,
\begin{equation*}
    \begin{split}
       A^{l+1}_j(t): =\left\{y\in \mathbb R^{l+1}:   t^\frac{1}{k}< y_j< 1, 0<  y_i < 1  \ \text{if} \ i\ne j, \text{and}\  \Pi_{1\le i\le l+1, i\ne j} y_i^{k_i}  =ty_j^{-k_j}\right\}. 
    \end{split}
\end{equation*}  
 Further  denote $\hat y_j: = (y_1, \ldots, y_{j-1}, y_{j+1},\ldots, y_{l+1})\in \mathbb R^{l}$, $$ t': =ty_j^{-k_j},$$ and 
 $$ A^{l}_j(t'):  = \left\{\hat y_j\in \mathbb R^{l}: 0<  y_i < 1,  i\ne j,  \text{and}\  \Pi_{1\le i\le l+1, i\ne j}^l y_i^{k_i}  =t'\right\}.$$ Noting that $ t'<  t^{1-\frac{k_j}{k}}$ when $y_j>  t^\frac{1}{k}$,    we obtain 
 $$m\left(A^{l+1}(t) \right) \le \sum_{j=1}^{l+1}\int_{t^\frac{1}{k}}^1\int_{  \Pi_{1\le i\le l+1, i\ne j} y_i^{k_i}  =ty_j^{-k_j} }   dS_{\hat y_j} dy_j\le (1-t^\frac{1}{k}) \sum_{j=1}^{l+1}  \sup_{0<t'<t^{1-\frac{k_j}{k}}}m\left( A^{l}_j(t') \right). $$ 
   On the other hand, since $k_j<k$, one has $t^{1-\frac{k_j}{k}}<<1$ when $t<<1$. By the induction assumption and the fact that $A^{l}_j(t') $ is in $\mathbb R^l$,   
$$\sup_{0<t'<t^{1-\frac{k_j}{k}}} m\left(A^{l}_j(t') \right)\lesssim 1 \ \ \text{for all}\ \  0<t<<1.  $$
This finally gives 
$$ m\left(A^{l+1}(t)\right)\lesssim 1 \ \ \text{for all}\ \  0<t<<1. $$
The lemma is proved. 
 \end{proof}
\medskip

 \begin{lem} \label{lm1} Given a real-analytic nonconstant function $f$ on $U$ with  $|f|< \frac{1}{2} $ on $U$, let $v$ be defined in \eqref{v},  and \begin{equation}\label{g}
    g: = \frac{\nabla f}{f\cdot (\ln |f|)^2}\ \ \text{on}\ \ U.
\end{equation}  Then  $g\in L^1_{loc}(U)$.  Moreover,  one has
 $$ \nabla v = g  \ \ \text{on} \ \ U  $$  
 in the sense of distributions.
\end{lem}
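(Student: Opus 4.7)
The function $v$ is real-analytic on $U\setminus f^{-1}(0)$, and a direct computation there shows that $\nabla v$ agrees classically with $g$. The plan is therefore to first establish $g\in L^1_{loc}(U)$, and then to extend the classical identity across the zero set $f^{-1}(0)$ in the distributional sense.

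For the integrability, the key tool is the coarea formula \eqref{ca}. Fix a compact $K\subset U$. I would apply \eqref{ca} with $\phi = \chi_K/(|f|(\ln|f|)^2)$ and use $|f|=|t|$ on $f^{-1}(t)$ to obtain
\[
\int_K |g|\, dV = \int_{-1/2}^{1/2} \frac{m(f^{-1}(t)\cap K)}{|t|(\ln|t|)^2}\, dt.
\]
Away from $t=0$ the weight is bounded, while $\int m(f^{-1}(t)\cap K)\,dt = \int_K |\nabla f|\,dV$ is finite by coarea with $\phi=\chi_K$. Near $t=0$, Lemma~\ref{b1} gives $m(f^{-1}(t)\cap K)\lesssim 1$, and the elementary bound $\int_0^{1/2} dt/(t(\ln t)^2)<\infty$ closes the estimate, yielding $g\in L^1_{loc}(U)$.

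For the distributional identity, my plan is to run a cutoff-and-integrate-by-parts scheme. I would pick $\eta\in C^\infty(\mathbb R)$ with $\eta(s)=0$ for $|s|\le 1$, $\eta(s)=1$ for $|s|\ge 2$, $|\eta'|\le 2$, and set $\eta_\epsilon(x):=\eta(|f(x)|/\epsilon)$. Then $\eta_\epsilon$ is Lipschitz, vanishes in a neighborhood of $f^{-1}(0)$, and $|\nabla\eta_\epsilon|\lesssim |\nabla f|/\epsilon$ is supported in $\{\epsilon\le|f|\le 2\epsilon\}$. For any $\varphi\in C_c^\infty(U)$, the product $v\eta_\epsilon$ is Lipschitz with compact support disjoint from $f^{-1}(0)$, so classical integration by parts yields
\[
\int_U v\,\eta_\epsilon\,\partial_j\varphi\,dV = -\int_U (\partial_j v)\eta_\epsilon\,\varphi\,dV - \int_U v(\partial_j\eta_\epsilon)\varphi\,dV.
\]
Dominated convergence (using that $v$ is bounded) sends the left-hand side to $\int v\,\partial_j\varphi\,dV$ and the first term on the right to $-\int g_j\varphi\,dV$ as $\epsilon\to 0$, since $g\in L^1_{loc}$.

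The main obstacle is showing that the remaining commutator term vanishes in the limit. On the annulus $\{\epsilon\le |f|\le 2\epsilon\}$ one has $|v|\le 1/|\ln(2\epsilon)|\approx 1/|\ln\epsilon|$, so
\[
\left|\int_U v(\partial_j\eta_\epsilon)\varphi\,dV\right| \lesssim \frac{\|\varphi\|_\infty}{\epsilon\,|\ln\epsilon|} \int_{\{\epsilon\le|f|\le 2\epsilon\}\cap\operatorname{supp}\varphi} |\nabla f|\,dV.
\]
A further application of the coarea formula together with Lemma~\ref{b1} bounds the last integral by $\int_{\epsilon\le |t|\le 2\epsilon} m(f^{-1}(t)\cap\operatorname{supp}\varphi)\,dt \lesssim \epsilon$, so the commutator is $O(1/|\ln\epsilon|)\to 0$. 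Passing to the limit in the identity above then delivers $\nabla v = g$ in the sense of distributions, concluding the argument.
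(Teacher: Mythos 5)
Your integrability argument is essentially the same as the paper's (coarea formula plus Lemma~\ref{b1}, then the elementary bound on $\int_0^{1/2} dt/(t(\ln t)^2)$). The interesting divergence is in how you establish the distributional identity. The paper takes a cutoff $\rho_\epsilon$ adapted to the \emph{Euclidean distance} to $K=f^{-1}(0)\cap\operatorname{supp}\eta$: $\rho_\epsilon=0$ on $K_\epsilon$, $\rho_\epsilon=1$ off $K_{3\epsilon}$, with $|\nabla\rho_\epsilon|\lesssim 1/\epsilon$. To make the commutator vanish the paper then bounds $|v|\lesssim 1/|\ln\epsilon|$ on the annulus $K_{3\epsilon}\setminus K_\epsilon$ (via Lipschitzness of $f$) and crucially invokes Loeser's tube-volume theorem, $m(K_{3\epsilon})\lesssim\epsilon^{\operatorname{codim}_{\mathbb R} f^{-1}(0)}\lesssim\epsilon$. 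You instead use a cutoff $\eta_\epsilon=\eta(|f|/\epsilon)$ adapted to the \emph{sublevel sets of} $|f|$, which makes $|\nabla\eta_\epsilon|\lesssim|\nabla f|/\epsilon$ concentrated on $\{\epsilon\le|f|\le 2\epsilon\}$, and then you dispatch $\int_{\{\epsilon\le|f|\le2\epsilon\}}|\nabla f|$ by a second application of the coarea formula and Lemma~\ref{b1}, getting $\lesssim\epsilon$. The net effect is the same $O(1/|\ln\epsilon|)$ decay, but your route reuses machinery already in the paper and avoids the external appeal to Loeser's result, so for this particular lemma it is more self-contained; the paper's distance-based tube estimate does, however, carry the extra information of the codimension exponent, which your $|f|$-based annulus does not. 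One small slip in presentation: $v\eta_\epsilon$ by itself is not compactly supported (only $v\eta_\epsilon\varphi$ is), but that does not affect the integration by parts since $\varphi\in C_c^\infty(U)$ supplies the compact support and $\operatorname{supp}(\eta_\epsilon)\subset\{|f|\ge\epsilon\}$ keeps the integrand away from $f^{-1}(0)$ where $v$ is smooth.
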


\begin{proof}
 First, we show that $ g\in L^1_{loc}(U)$. Since $f$ is real-analytic on $U$,   shrinking $U$ if necessary, one can assume   $f$   to be (globally) Lipschitz on $U$.  Making use of the coarea formula \eqref{ca},   one gets
    \begin{equation*}
        \begin{split}
            \int_U|g(x)|dV_x &= \int_U  \frac{\left|\nabla f(x)\right| }{|f(x)| \left(\ln |f(x)|\right)^{2}}dV_x \\ 
            & \le \int_{-\frac{1}{2}}^{\frac{1}{2}}\int_{f^{-1}(t)}  \frac{1 }{|f(x)| \left(\ln |f(x)|\right)^{2}}dS_xdt  = \int_{-\frac{1}{2}}^{\frac{1}{2}}  \frac{ m( f ^{-1}(t)) }{|t|(\ln |t|)^2} dt.
        \end{split}
    \end{equation*}
Lemma \ref{b1} further allows us to infer
$$  \int_U|g(x)|dV_x \lesssim \int_0^{\frac{1}{2}}  \frac{ 1 }{t(\ln t)^2} dt = \int_{\ln 2}^\infty  \frac{ 1 }{s^2} ds<\infty.$$

Next, we show that given any testing function $\eta \in C_c^\infty(U)$,
  \begin{equation}\label{l3}
      -\int_U v\nabla \eta = \int_U \eta g.  
  \end{equation}
Since $v$ is differentiable away from $f^{-1}(0)$,  a direct computation gives 
  \begin{equation}\label{ea}
       \nabla v = g \ \ \text{on} \ \ U\setminus f^{-1}(0). 
  \end{equation}
In particular,   \eqref{l3} is trivially true if $K: = f^{-1}(0)\cap supp \ \eta = \emptyset $.

If $K\ne \emptyset$,    given  $\epsilon>0$ let 
$$K_\epsilon: = \{x\in U: dist(x, K)\le \epsilon\},$$ 
where $dist(x, K)$ is the distance function from $x$ to the set $K$. Let $\rho_\epsilon\in C^\infty(U)$ be  such that  
  $\rho_\epsilon =0$ in $K_\epsilon$, $\rho_\epsilon =1$ in $U\setminus K_{3\epsilon}$ and $|\nabla \rho_\epsilon|\lesssim \frac{1}{\epsilon}$ on $U$. See, for instance, \cite[Theorem 1.2.1-2]{Ho}. Then $\rho_\epsilon \eta\in C_c^\infty\left(U\setminus f^{-1}(0)\right)$. Using \eqref{ea}  we immediately have
  \begin{equation*}
        -\int_U v\nabla (\rho_\epsilon \eta) = \int_U \rho_\epsilon \eta g,
  \end{equation*}
  or  equivalently,
     \begin{equation}\label{l2}
        -\int_U v \eta  \nabla \rho_\epsilon - \int_U v\rho_\epsilon \nabla   \eta  = \int_U \rho_\epsilon \eta g.
  \end{equation}
We shall prove   
\begin{equation}\label{l5}
    \lim_{\epsilon \rightarrow 0}\int_U v \eta  \nabla \rho_\epsilon =0. 
\end{equation}
  If so, then passing $\epsilon\rightarrow 0$ in \eqref{l2}, we   obtain the desired equality \eqref{l3} as a consequence of Lebesgue's dominated  convergence theorem.

To prove \eqref{l5}, first by the assumption on $\rho_\epsilon$, 
\begin{equation}\label{bd}
        \left| \int_U v \eta  \nabla \rho_\epsilon \right|    =  \left| \int_{K_{3\epsilon\setminus K_\epsilon}} v \eta  \nabla \rho_\epsilon   \right|
    \lesssim  \frac{C}{\epsilon}\int_{K_{3\epsilon\setminus K_\epsilon}} |v|     
    \end{equation}
 for some constant $C$ dependent only on $\eta$. Since $f$ is Lipschitz on $U$, for any $x_0\in  f^{-1}(0)$, $|f(x)| = |f(x)-f(x_0)|\lesssim |x-x_0|$. In particular, $$|f(x)|\lesssim dist\left(x, f^{-1}(0)\right).$$ Thus   for all $x\in K_{3\epsilon}\setminus K_\epsilon$ (equivalently,  $\epsilon<dist\left(x, f^{-1}(0)\right)< 3\epsilon$),  one has
  $$ |v(x)|= \frac{1}{\left|\ln |f(x)|\right|}\lesssim \frac{1}{|\ln \left( dist\left(x, f^{-1}(0)\right) \right)  |}\approx \frac{1}{|\ln \epsilon|} $$
  for all $\epsilon$ small enough. 
Hence by \eqref{bd} 
\begin{equation}\label{l4}
 \left| \int_U v \eta  \nabla \rho_\epsilon    \right|\lesssim   \frac{C m(K_{3\epsilon} )}{\epsilon|\ln \epsilon|}.
\end{equation}
 On the other hand, according to a result of Loeser (see \cite[Theorem 1.1]{Lo86} and its consequent Remarks), 
 $$ m(K_{3\epsilon} )\lesssim \epsilon^{\text{codim}_\mathbb R f^{-1}(0)}\lesssim   \epsilon.$$
Here the last inequality has used the fact that   $   \text{codim}_{\mathbb R} f^{-1}(0)\ge 1 $ due to the  real-analyticity of $f$. The equality \eqref{l5} follows by combining the above with \eqref{l4}. 
\end{proof}

\medskip

\begin{proof}[Proof of Theorem \ref{main}:] Since $|f|<\frac{1}{2}$, we have $\left|\ln |f|\right|>\ln 2$ and so $|v|<\frac{1}{\ln 2}\in L^\infty(U)$. Part 1) follows from  this and Lemma \ref{lm1}. For part 2), we only need to show that  the function $g$ defined in \eqref{g} does not belong to $L_{loc}^p $ for any $ p>1$ near any neighborhood of $f^{-1}(0)$.  

First, according to the Lojasiewicz inequality, by shrinking $U$ if necessary, there exists some constant  $\beta\in (0, 1)$ such that 
\begin{equation}\label{lo}
    |\nabla f(x)| \gtrsim  |f(x)|^\beta, \ \ x\in U.
\end{equation}
 As a consequence of this,  
      \begin{equation*}
        \begin{split}
            \int_U|\nabla v(x)|^pdV_x &= \int_U  \frac{| \nabla f(x) | }{|\nabla f(x)|^{-(p-1)}|f(x)|^p  \left|\ln |f(x)|\right|^{2p}}dV_x\gtrsim \int_U  \frac{| \nabla f(x) |}{|f(x)|^{p - (p-1)\beta } \left|\ln |f(x)|\right|^{2p}}dV_x.               \end{split}
    \end{equation*}
  Utilizing  the coarea formula, we have for some $\epsilon_0>0$, 
  \begin{equation*}
        \begin{split}
            \int_U|\nabla v(x)|^pdV_x & \gtrsim  \int_{-\epsilon_0}^{\epsilon_0}\int_{f^{-1}(t)}  \frac{1 }{|f(x)|^{p - (p-1)\beta } \left|\ln |f(x)|\right|^{2p}}dS_xdt=   \int_{-\epsilon_0}^{\epsilon_0}   \frac{m(f^{-1}(t))  }{|t|^{p - (p-1)\beta } \left|\ln |t|\right|^{2p}} dt. 
        \end{split}
    \end{equation*}
    
    Since  $ \text{codim}_\mathbb R f^{-1}(0) =1$, there exists some $x_0\in f^{-1}(0)\cap U  $, such that  $|\nabla f(x_0)|\ne 0$. Let $V$ be a neighborhood of $x_0$ in $U$ such that  $|\nabla f|\gtrsim 1$ on $V$. Then for all   $t$ small enough, $m\left(f^{-1}(t)\cap V\right) \gtrsim 1 $. Consequently,  $m\left(f^{-1}(t)\right) \gtrsim 1$ for    $0<t<<1$.     Thus
    $$    \int_U|\nabla v(x)|^pdV_x\gtrsim  \int_{ 0}^{\epsilon_0}   \frac{1  }{t^{p - (p-1)\beta } \left|\ln t \right|^{2p}} dt. $$
    Note that    $p - (p-1)\beta>1 $ necessarily when $p>1$ and $\beta<1$. Hence the last term  is unbounded. The proof is complete. 
\end{proof}

\medskip
\begin{proof}[Proof of Corollary \ref{co}:]
Since $A$ is real-analytic, there exists an open neighborhood $V\subset \mathbb R^n$ of $p$ and  some real-analytic function $f$ on $V$ such that $A\cap V = \{x\in V: f(x) =0\}$. Then $u = \frac{1}{\ln |f|}$ is the desired function satisfying  the assumptions. 
\end{proof}

\medskip

 For functions (such as $\ln |f|$) with  singularities, its composition with another logarithm typically exhibits reduced singularities. The following theorem shows that composing extra logarithms    does not improve Sobolev regularity in general.

\begin{theorem}  Let $U$ be a domain in $\mathbb R^n, n\ge 1$. Let $f$ be a real-analytic nonconstant  function on $U$ and $|f|< \frac{1}{10} $ on $U$. The follow statements hold.\\ 
    1). $\frac{1}{\ln(\ln|f|)}\in W^{1, 1}_{loc}(U)$. \\
   2). If $ \text{codim}_\mathbb R f^{-1}(0)  =1$, then $\frac{1}{\ln(\ln|f|)}\notin W^{1, p}_{loc}(U) $ for any $p>1$.  
\end{theorem}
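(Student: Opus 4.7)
The plan is to run the proof of Theorem~\ref{main} essentially verbatim, since iterating the logarithm only produces slowly-varying factors that are harmless for the $L^1$ estimate and negligible against a $t^{-(p-(p-1)\beta)}$ blow-up for the $L^p$ lower bound. Throughout I interpret $\ln(\ln|f|)$ as $\ln(-\ln|f|)$, which is unambiguous and positive on $U$: the hypothesis $|f|<\frac{1}{10}$ forces $-\ln|f|>\ln 10>1$, so $\ln(-\ln|f|)>\ln\ln 10>0$. Set
\[
w:=\frac{1}{\ln(-\ln|f|)},\qquad h:=-\frac{\nabla f}{f\cdot \ln|f|\cdot\bigl(\ln(-\ln|f|)\bigr)^{2}}\ \ \text{on}\ U\setminus f^{-1}(0).
\]
Note that $w$ is bounded by $1/\ln\ln 10$ and continuous on $U$ with $w|_{f^{-1}(0)}=0$, and that $h=\nabla w$ pointwise on $U\setminus f^{-1}(0)$.

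For part~1), I would first check $h\in L^{1}_{loc}(U)$ by the coarea formula and Lemma~\ref{b1}:
\[
\int_{U}|h|\,dV\lesssim\int_{-1/10}^{1/10}\frac{m(f^{-1}(t))}{|t|\,|\ln|t||\,\bigl(\ln(-\ln|t|)\bigr)^{2}}\,dt\lesssim\int_{0}^{1/10}\frac{dt}{t\,\ln(1/t)\,\bigl(\ln\ln(1/t)\bigr)^{2}}.
\]
The substitution $s=\ln(1/t)$ followed by $u=\ln s$ reduces the last integral to $\int_{\ln\ln 10}^{\infty}du/u^{2}<\infty$. To promote $h$ to the distributional gradient of $w$, I would rerun the cutoff argument from Lemma~\ref{lm1} with the same $\rho_{\epsilon}$. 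The only line that needs to be rechecked is the pointwise bound on $w$ in the annulus $K_{3\epsilon}\setminus K_{\epsilon}$: since $|f(x)|\lesssim\mathrm{dist}(x,f^{-1}(0))\lesssim\epsilon$ there, one gets $|w(x)|\lesssim 1/\ln(-\ln(3\epsilon))\approx 1/\ln\ln(1/\epsilon)$. Combined with Loeser's bound $m(K_{3\epsilon})\lesssim\epsilon$ used in Lemma~\ref{lm1}, this yields
\[
\left|\int_{U}w\,\eta\,\nabla\rho_{\epsilon}\right|\lesssim\frac{m(K_{3\epsilon})}{\epsilon\,\ln\ln(1/\epsilon)}\lesssim\frac{1}{\ln\ln(1/\epsilon)}\xrightarrow{\epsilon\to 0}0,
\]
and the dominated convergence argument closes exactly as before. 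Since $w\in L^{\infty}(U)$ and $\nabla w=h\in L^{1}_{loc}(U)$, part~1) follows.

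For part~2), under the codimension-one hypothesis, I would invoke the Lojasiewicz inequality \eqref{lo} and coarea as in the proof of Theorem~\ref{main}:
\[
\int_{U}|\nabla w|^{p}\,dV\gtrsim\int_{-\epsilon_{0}}^{\epsilon_{0}}\frac{m(f^{-1}(t))}{|t|^{p-(p-1)\beta}\,|\ln|t||^{p}\,\bigl(\ln(-\ln|t|)\bigr)^{2p}}\,dt.
\]
Choosing $x_{0}\in f^{-1}(0)$ with $|\nabla f(x_{0})|\neq 0$ (which exists because $\text{codim}_{\mathbb{R}}f^{-1}(0)=1$) gives $m(f^{-1}(t))\gtrsim 1$ for $0<t\ll 1$, reducing matters to showing
\[
\int_{0}^{\epsilon_{0}}\frac{dt}{t^{p-(p-1)\beta}\,(\ln(1/t))^{p}\,\bigl(\ln\ln(1/t)\bigr)^{2p}}=\infty.
\]
Since $\beta\in(0,1)$ and $p>1$ give $p-(p-1)\beta>1$, the leading $t^{-\alpha}$ singularity with $\alpha>1$ dominates the extra slow logarithmic factors and the integral diverges.

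I do not foresee a genuine obstacle; the proof is a word-for-word adaptation of Theorem~\ref{main}. The only point requiring mild care is verifying that the annular bound $|w|\lesssim 1/\ln\ln(1/\epsilon)$ still defeats Loeser's $m(K_{3\epsilon})\lesssim\epsilon$ after division by $\epsilon$, which it does since $\ln\ln(1/\epsilon)\to\infty$. Everything else, including the divergence in part~2), is insensitive to replacing $(\ln|t|)^{2}$ by $|\ln|t||\cdot(\ln\ln(1/|t|))^{2}$ in the denominator.
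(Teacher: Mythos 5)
Your proposal is correct and matches the paper's proof essentially verbatim: both parts rely on the same formula for $\nabla w$ established via the Harvey--Polking cutoff argument of Lemma \ref{lm1}, the coarea formula together with Lemma \ref{b1} for the $L^{1}$ bound, and the Lojasiewicz inequality \eqref{lo} with the codimension-one lower bound $m(f^{-1}(t))\gtrsim 1$ for the $L^{p}$ divergence. If anything, you spell out more explicitly than the paper does the rechecking of the annular estimate $|w|\lesssim 1/\ln\ln(1/\epsilon)$ against Loeser's $m(K_{3\epsilon})\lesssim\epsilon$, which the paper subsumes under ``applying a similar approach as in Lemma \ref{lm1}.''
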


\begin{proof}
    Applying a similar approach as in the proof of Lemma \ref{lm1}, we first have 
    $$ \nabla \left(\frac{1}{\ln|\ln|f||}\right)  = \frac{\nabla f}{f\cdot \ln |f|\cdot (\ln|\ln|f||)^2 } \ \ \text{on} \ \ U  $$
  in the sense of distributions. Making use of the coarea formula and Lemma \ref{b1},     \begin{equation*}
        \begin{split}
            \int_U \left|\nabla \left(\frac{1}{\ln|\ln|f||}\right) \right|&\le  \int_{-\frac{1}{10}}^{\frac{1}{10}}\int_{ f^{-1}(t)}  \frac{1 }{|f(x)| |\ln |f(x)||\left(\ln |\ln 
      |f(x)||\right)^{2}}dS_xdt\\
            & \lesssim  \int_0^{\frac{1}{10}}  \frac{ 1 }{t|\ln t| (\ln|\ln t|)^2} dt  =   \int_{\ln 10}^\infty  \frac{ 1 }{t (\ln t|)^2} dt  =   \int_{\ln\ln 10}^\infty  \frac{ 1 }{t^2} dt\lesssim 1.
        \end{split}
    \end{equation*}

    In the case when $p>1$,  there exists  some  $0<\beta<1$ by \eqref{lo},  and some small $\epsilon_0>0$ such that
     \begin{equation*}
        \begin{split}
            \int_U \left|\nabla \left(\frac{1}{\ln|\ln|f||}\right)\right|^p&\gtrsim \int_U  \frac{| \nabla f(x) |}{|f(x)|^{p - (p-1)\beta }  |\ln |f(x)||^p\left(\ln |\ln 
      |f(x)||\right)^{2p}}dV_x\\ 
            & \gtrsim  \int_0^{\epsilon_0}  \frac{ 1 }{t^{p - (p-1)\beta }|\ln t| (\ln|\ln t|)^2} dt. 
        \end{split}
    \end{equation*}
    Since  $p - (p-1)\beta>1 $, the last term is divergent. This completes the proof of the theorem.  
\end{proof}

 \section{Proof of Theorem \ref{main2}}
To  prove Theorem \ref{main2} for holomorphic functions, we shall need the following well-known complex version Hironaka's resolution of singularity theorem. See, for instance, a lecture note \cite{Sm}.

\begin{theorem}\label{hi2}
    Let $f$ be a holomorphic function defined near a neighborhood of $0\in \mathbb C^n$. Then there exists an open set $U\subset \mathbb C^n$ near $0$,  a complex manifold $\tilde U$ of dimension $n$ and a proper holomorphic map $\phi: \tilde U\rightarrow U$ such that\\
    1). $\phi: \tilde U\setminus \widetilde{f^{-1}(0)} \rightarrow U\setminus f^{-1}(0) $ is a biholomorphism, where $\widetilde{f^{-1}(0)}: =  \{p\in \tilde U:  \phi(p)\in f^{-1}(0)\}$.\\
    2). For each $p\in \tilde U$, there exist    local holomorphic coordinates $(w_1, \ldots, w_n)$ centered at $p$,  such that  near $p$  one has 
    $$  f\circ \phi(w) = u(w)\cdot \Pi_{i=1}^n w_i^{k_i},$$
    where $u$ is holomorphic   and $u\ne 0$, $k_i\in \mathbb Z^+\cup \{0\}$. 
\end{theorem}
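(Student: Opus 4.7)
The plan is to reduce Theorem \ref{hi2} to Hironaka's embedded resolution of singularities for complex analytic hypersurfaces, invoked essentially as a black box from the lecture notes \cite{Sm}, and then extract from the normal crossings conclusion the local product form stated in part 2). The real-analytic version \cite{At} used earlier (Theorem \ref{hi}) is proved by an analogous reduction, so the strategy mirrors the one already implicit in that citation.

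First I would set $D = f^{-1}(0)$, the (possibly singular) complex analytic hypersurface near $0 \in \mathbb{C}^n$. By Hironaka's theorem, there exists a proper holomorphic map $\phi: \tilde U \to U$ from a smooth $n$-dimensional complex manifold $\tilde U$, realized as a finite composition of blow-ups along smooth complex submanifolds supported over the successive singular loci of the strict transforms of $D$, such that the total transform $\phi^{*}D$ is a divisor with simple normal crossings on $\tilde U$. Since each individual blow-up is a biholomorphism away from its center, and every center lies over $D$, the composition $\phi$ is a biholomorphism between $\tilde U \setminus \widetilde{f^{-1}(0)}$ and $U \setminus f^{-1}(0)$. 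This gives part 1) and the properness of $\phi$.

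Next I would extract the normal form in part 2) from the SNC conclusion. Fix $p \in \tilde U$ and let $I \subset \{1, \dots, n\}$ index the irreducible components of $\phi^{*}D$ passing through $p$. By definition of simple normal crossings, one can choose local holomorphic coordinates $(w_1, \dots, w_n)$ centered at $p$ so that each such component is cut out by $\{w_i = 0\}$ for $i \in I$. Writing $k_i \in \mathbb{Z}^+$ for the multiplicity of $f\circ\phi$ along $\{w_i=0\}$ when $i \in I$, and setting $k_i = 0$ otherwise, the quotient
\[
u(w) := \frac{f\circ \phi(w)}{\prod_{i=1}^n w_i^{k_i}}
\]
a priori meromorphic, extends holomorphically across each coordinate hyperplane because the pole orders are precisely cancelled by the vanishing orders of $f\circ \phi$. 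Since $\phi^{*}D$ accounts for all the zeros of $f\circ\phi$ near $p$, the extended $u$ is zero-free in a neighborhood of $p$, yielding the required factorization $f\circ \phi(w) = u(w)\prod_i w_i^{k_i}$ with $u$ a nonvanishing holomorphic unit.

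The essential obstacle is of course Step 1 itself: Hironaka's theorem is a deep result, and I do not intend to reprove it. The substantive content in my proposal, beyond citing \cite{Sm}, is the translation between the SNC language in which resolution is customarily stated and the monomial-times-unit language used in the theorem statement. The only point requiring care is confirming that $u$ is truly nonvanishing on a full neighborhood of $p$ (not merely off the coordinate hyperplanes); this is where one uses that the exceptional locus plus the strict transform of $V(f)$ together form the entire zero set of $f \circ \phi$ near $p$, a property built into the definition of the total transform.
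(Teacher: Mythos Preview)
The paper does not supply a proof of this theorem at all: it is stated as a well-known result with a bare citation to \cite{Sm}, exactly as the real-analytic version (Theorem \ref{hi}) is stated with a citation to \cite{At}. Your proposal is therefore already more than the paper provides, and the content you add---invoking embedded resolution as a black box to obtain a simple normal crossings total transform, then reading off the monomial-times-unit local form by dividing out the coordinate monomial with the correct multiplicities---is the standard and correct translation. Your remark that the nonvanishing of $u$ near $p$ follows because the total transform $\phi^*D$ exhausts the zero locus of $f\circ\phi$ is exactly the point that makes the argument work.
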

\medskip

\begin{proof}[Proof of Theorem \ref{main2}: ] 1). 
  Since $\bar\partial f =0$, and according to   Lemma  \ref{lm1},   
    $$\partial f    = \frac{\partial f}{f\cdot (\ln |f|)^2}\in L^1_{loc}(U)$$
  in the sense of distributions, we only need to show that   $$\frac{\partial f}{f\cdot (\ln |f|)^2}\in L^2_{loc}(U). $$  On the other hand, making use  of Hironaka's resolution of singularity Theorem \ref{hi2} for holomorphic functions, for every $  p\in \widetilde{f^{-1}(0)}   $, let  $(\tilde V, \psi)$ be  a coordinate chart near $  p$ in $\tilde U$ such that  for $w\in \psi (\tilde V)\subset \left \{w\in \mathbb C^n: |w_j|<\frac{1}{2}\right\}$,
 $$\tilde f(w): =  f\circ \phi \circ \psi^{-1}(w) = u(w) \cdot \Pi_{i=1}^n w_i^{k_i}, $$
 where $u\ne 0$ on $\psi (\tilde V) $ and $k_i\in \mathbb Z^+\cup\{0\}$. Let  $V: = \phi (\tilde V )$, $\Phi: = \phi \circ \psi^{-1}$, and $\text{Jac}_\Phi $ be the complex Jacobian of the holomorphic map $\Phi$. Note that the inverse matrix $(\text{Jac}_\Phi)^{-1} $ is smooth on $\psi\left(\tilde V\setminus \widetilde{f^{-1}(0)}\right) $, and $$\left|(\text{Jac}_{\Phi})^{-1}(w)   \cdot  \det(\text{Jac}_\Phi)(w) \right|\lesssim 1\ \ \ \text{for all}\ \ w\in \psi\left(\tilde V\setminus \widetilde{f^{-1}(0)}\right).$$  By change of variables formula,     \begin{equation*}
        \begin{split}
            \int_V  \frac{|\partial_z f(z)|^2}{|f(z)|^2(\ln |f(z)|)^4} dV_z&= \int_{\Phi^{-1}\left(V\setminus f^{-1}(0)\right)} \Phi^*\left(\frac{|\partial_z f(z)|^2}{|f(z)|^2(\ln |f(z)|)^4}  dV_z\right)\\
             & \lesssim \int_{\psi\left(\tilde V\setminus \widetilde{f^{-1}(0)}\right)}  \frac{ |\partial_w   \tilde f(w) |^2 |(\text{Jac}_{\Phi})^{-1}(w)|^2 }{|\tilde f(w)|^2(\ln |\tilde f(w)|)^4} |\det(\text{Jac}_\Phi(w))|^2 dV_w\\
            & \lesssim \int_{\psi(\tilde V)}  \frac{|\partial_w \tilde f(w)|^2}{|\tilde f(w)|^2(\ln |\tilde f(w)|)^4}  dV_w. 
        \end{split}
    \end{equation*}
Thus, the proof boils down to showing that for $j=1, \ldots, n$,  \begin{equation}\label{mm}
    \int_{\psi(\tilde V)}  \frac{|\partial_{w_j} \tilde f(w)|^2}{|\tilde f(w)|^2(\ln |\tilde f(w)|)^4}  dV_w\lesssim 1.
\end{equation}

For simplicity,  let $j =1$ in \eqref{mm}. If $k_1=0$, then $\partial_{w_1} \tilde f(w) = \partial_{w_1}   u(w)\cdot  \Pi_{i=1}^n w_i^{k_i}$. Since $ \frac{1}{(\ln |\tilde f(w)|)^4 } \lesssim 1$ and $u\ne 0$,  when $w$ is near $0$, 
$$   \frac{|\partial_{w_1} \tilde f(w)|^2}{|\tilde f(w)|^2(\ln |\tilde f(w)|)^4}  =  \frac{|\partial_{w_1}u(w)|^2}{|u(w)|^2(\ln |\tilde f(w)|)^4}\lesssim 1.$$
So \eqref{mm} holds.    If $k_1>0$, then   $\partial_{w_1} \tilde f(w) = \partial_{w_1}   u(w)\cdot  \Pi_{i=1}^n w_i^{k_i} +  k_1 u(w)\cdot w_1^{k_1-1}\cdot \Pi_{i=2}^n w_i^{k_i}. $ Hence 
$$   \frac{|\partial_{w_1} \tilde f(w)|^2}{|\tilde f(w)|^2(\ln |\tilde f(w)|)^4}  \lesssim   \frac{|\partial_{w_1}u(w)|^2}{|u(w)|^2(\ln |\tilde f(w)|)^4}  + \frac{k_1^2}{|w_1|^2(\ln |\tilde f(w)|)^4} \lesssim 1 + \frac{1}{|w_1|^2(\ln |\tilde f(w)|)^4} .$$
Note that when $w$ is close to $0$,   
\begin{equation}
  \left|  \ln |\tilde f(w)|\right| =  \left|  \ln |u(w)|  +\sum_{i=1}^n k_i\ln |w_i|\right|\gtrsim - \ln |w_1|.
\end{equation}
This leads to
\begin{equation*}
    \begin{split}
      \int_{\psi(\tilde V)}  \frac{|\partial_{w_1} \tilde f(w)|^2}{|\tilde f(w)|^2(\ln |\tilde f(w)|)^4}  dV_w&\lesssim  1+ \int_{\psi(\tilde V)} \frac{1}{|w_1|^2|\ln|w_1||^4}dV_w\\
      &\lesssim  1+ \int_0^\frac{1}{2} \frac{1}{s  (\ln s)^4}ds\lesssim 1.
    \end{split}
\end{equation*}
  \eqref{mm} and thus part 1) are proved.

2). Let $U_1$ be an open subset of $ U$ such that $f^{-1}(0)\cap U_1$ is regular. Then there exists a holomorphic coordinate change on $U_1$ such that under the new coordinates $(w_1, \ldots, w_n)$, one has $w_n = f(z)$. As a consequence of this,   
      \begin{equation*}
        \begin{split}
            \int_U\left|\frac{\partial_z f}{f\cdot (\ln |f|)^2}\right|^pdV_z &\ge  \int_{U_1}  \left|\frac{\partial_z  f}{f\cdot (\ln |f|)^2}\right|^p dV_z \approx  \int_{U_1}  \frac{1}{|w_n|^{p} \left|\ln |w_n|\right|^{2p}}dV_w\gtrsim  \int_0^{\epsilon_0} \frac{1}{s^{p-1}|\ln s|^{2p}}ds               \end{split}
    \end{equation*}
for some $\epsilon_0>0$.   Since $p> 2$,  the last term is unbounded. This proves part 2).  
  \end{proof}

\medskip

\begin{proof}[Proof of Corollary \ref{co2}:] The proof is similar to that of Corolary \ref{co}, with Theorem \ref{main} substituted by Theorem \ref{main2}, and is omitted.     
\end{proof}
\medskip

An  application  of Theorem \ref{main2} is to provide ample data to the $\bar\partial$ problem in complex analysis, in particular, within the framework of  H\"ormander's classical $L^2$ theory for   $\bar\partial$-closed forms with $L_{loc}^2$ coefficients. Normally,  generating smooth data  is straightforward. In the following, we construct data with singularity on  complex analytic varieties, where  H\"ormander's  theory can still be applied. 

\begin{example}
    Let $\Omega$ be a pseudoconvex domain in $\mathbb C^n$. 
    Let  $f$ be a holomorphic function on $\Omega$ such that $f^{-1}(0)\ne \emptyset$. 
    Given $\epsilon>0$, let
$$ U_\epsilon = \{z\in \Omega: |f|<\epsilon\}.$$
Choose $\chi\in C^\infty(\Omega)$ such that $\chi =1$ on $U_{\frac{1}{4}}$, and $\chi =0$ outside $U_{\frac{1}{2}}$. Then $g = \frac{\chi}{\ln |f|}\in W_{loc}^{1, 2}(\Omega)$ by Theorem \ref{main2}. Furthermore, $u: = \bar\partial g$ is a $\bar\partial$-closed $(0, 1)$ form with $L_{loc}^2$ coefficients with singularity at $f^{-1}(0)$. 
\end{example}

\bibliographystyle{alphaspecial}

\fontsize{11}{11}\selectfont

\vspace{0.7cm}

\noindent pan@pfw.edu,

\vspace{0.2 cm}

\noindent Department of Mathematical Sciences, Purdue University Fort Wayne, Fort Wayne, IN 46805-1499, USA.\\

\noindent zhangyu@pfw.edu,

\vspace{0.2 cm}

\noindent Department of Mathematical Sciences, Purdue University Fort Wayne, Fort Wayne, IN 46805-1499, USA.\\
\end{document}